\documentclass[12pt,a4paper]{amsart}
\usepackage{graphics}
\usepackage{epsfig}
\usepackage{graphicx}
\theoremstyle{plain}
\usepackage{amssymb}
\usepackage{color}
\advance\hoffset-20mm \advance\textwidth40mm

\newtheorem{theorem}{Theorem}
\newtheorem{lemma}{Lemma}

\theoremstyle{definition}

\newtheorem*{definition*}{Definition}

\newtheorem{remark}{Remark}

\DeclareMathOperator{\rk}{rk}

\DeclareMathOperator{\ad}{ad}

\begin{document}
\sloppy
\title[A family of maximal subalgebras of the Lie algebra~$W_n(K)$]
{A family of maximal subalgebras of the Lie algebra~$W_n(K)$}
\author
{Y.Chapovskyi, A.Petravchuk}
\address{Institute of Mathematics, National Academy of Sciences of Ukraine,
Tereschenkivska street, 3, 01004 Kyiv, Ukraine}
\email{safemacc@gmail.com}
\address{ Faculty of Mechanics and Mathematics,
Taras Shevchenko National University of Kyiv, 64, Volodymyrska street, 01033  Kyiv, Ukraine}
\email{ petravchuk@knu.ua, apetrav@gmail.com}

\date{\today}
\keywords{Lie algebra, maximal subalgebra, polynomial ring, derivation, module over polynomial ring }
\subjclass[2000]{17B65, 17B66, 17B05}

\begin{abstract}
Let $K$ be an algebraically closed field of characteristic zero and
${P_n=K[x_1,\ldots,x_n]}$ the polynomial ring.
Any $K$-derivation $D$ on $P_n$ is of the form
${
D=\sum_{i=1}^n f_i(x_1,\ldots,x_n)\frac{\partial}{\partial x_i}
},$
where $f_i\in P_n.$ All such derivations form the Lie algebra
$W_n(K)$ over the field $K$.
We prove that for $s=1,\ldots,n-1$ the subalgebra
$
m_s(K)=\left\{
\sum_{i=1}^s f_i\frac{\partial}{\partial x_i}
+\sum_{j=s+1}^n g_j\frac{\partial}{\partial x_j}
\mid f_i\in P_s,\ g_j\in P_n
\right\}
$
is a maximal subalgebra of~$W_n(K)$.
The ideal
$
I_s=\left\{\sum_{j=s+1}^n g_j\frac{\partial}{\partial x_j}\right\}
$
of $m_s(K)$ is isomorphic to the Lie algebra
$P_s\otimes \mathrm{Der}(K[x_{s+1},\ldots,x_n])$
and   $m_s(K)/I_s\simeq W_s(K)$. The Lie algebra $W_n(K)$ is also the free module over the ring $P_n.$ Therefore,  for  any set $S\subseteq W_n(K)$ the rank $rk(S)$ (over $P_n$) is defined. 
Some properties of maximal subalgebras of rank $n$ in $W_n(K)$
are pointed out.
\end{abstract}
\maketitle


\section{Introduction}

Let $K$ be an algebraically closed field of characteristic zero,
$P_n=K[x_1,\ldots,x_n]$ the polynomial ring in $n$ variables, and $R_n=K(x_1, \ldots , x_n)$ the field of rational functions. Denote by 
$W_n=W_n(K)$ the Lie algebra of all $K$-derivations on $P_n$.
Any element $D\in W_n$ can be uniquely written in the form
\[
D=f_1(x_1,\ldots,x_n)\frac{\partial}{\partial x_1}
+\cdots+
f_n(x_1,\ldots,x_n)\frac{\partial}{\partial x_n},
\qquad f_i\in P_n,
\]
so the Lie algebra $W_n$ is also the free module of rank $n$ over the ring $P_n$ (with the standard basis $\frac{\partial}{\partial x_1}, \ldots , \frac{\partial}{\partial x_n} $).
From the viewpoint of differential geometry, $W_n$ is the Lie algebra
of polynomial vector fields on $K^n$.
The structure of the Lie algebra $W_n$ has been  studied by many authors
but  many open problems remain.
For example, the structure of maximal subalgebras was described only
in case $n=1$, that is,  for the Lie algebra $W_1$ (see, \cite{Bell}), while for $n\ge2$
only  limited information is available.
Maximal subalgebras of finite codimension from $W_n$ were studied in
\cite{Rudak1}, \cite{Rudak2}, maximal locally nilpotent and solvable subalgebras were
studied in \cite{ESS}, see also \cite{MP} and \cite{Bavula}.

In this paper, we point out a family of maximal subalgebras
$m_s(K),  s=1, \ldots, n-1$ of the Lie algebra $W_n=W_n(K)$ and prove that they are pairwise
non-isomorphic for different values of  $s$ (Theorem~\ref{th_1}).
We also study maximal subalgebras $M$ of the Lie algebra $W_n$ of rank $n$ over $P_n$. If $M$ is not a submudule of the $P_n$-module $W_n$ and contains nonzero ideals of rank $<n,$ then $M$ contains a unique polynomial ideal of some rank $<n$ of the Lie algebra $M$ 
(Theorem~\ref{th:poly}).

Notation used in this paper is standard. 
We also use the standard grading on $W_n :$ 
 ${W_n = \bigoplus_{j \ge -1} W^{[j]}_n}$,
where 
$$W^{[j]}_n = \left \{
\sum_{i=1}^n f_i \frac{\partial}{\partial x_i} \in W_n \mid 
f_i \text{ are either zero or homogeneous polynomials of degree} \  j+1 \right \}.$$ 
If $D\in W^{[j]}_n,$  we say that $D$ is homogeneous of degree $\deg D=j. $
 For a $P_n$-module M we define its rank $\rk M$ as $\rk M = \dim_{R_n} R_n \otimes_{P_n} M$.
 The rank of a subset of a $P_n$-module is the rank of the corresponding generated submodule. A subalgebra $L$  of the Lie algebra $W_n$ will be called a polynomial subalgebra if $L$ is a submodule of the  $P_n$-module $W_n$  (the latter means that $P_nL=L$.) 
 
\section{A family of maximal subalgebras of $W_n(K)$.}
The following statements can be verified immediately (see, for example,  \cite{Now}).

\begin{lemma} \label{lm:elem}
	Let $D_1, D_2 \in W_n(K)$ and $f, g \in P_n = K[x_1, \dots, x_n.]$ Then: 
	\begin{enumerate}
		\item $[f D_1, g D_2] = f D_1(g) D_2 - g D_2(f) D_1 + fg[D_1, D_2]$.
		\item If $[D_1, D_2] = 0$, then $[f D_1, g D_2] = f D_1(g) D_2 - g D_2(f) D_1$.
		\item If $E_n = \sum_{j=1}^{n} x_j \frac{\partial}{\partial x_j}$ is the Euler derivation, and $f$ is a homogeneous polynomial of degree $m$, then $E_n(f) = m f$.
		\item If $D \in W_n(K)$ is a homogeneous derivation of degree $k$, then $[E_n, D] = k D$.
	\end{enumerate}
\end{lemma}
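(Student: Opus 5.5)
All four items come from writing $[fD_1,gD_2]$ as a commutator of operators on $P_n$ and using that a derivation satisfies the Leibniz rule. I will prove item (1) directly, then read off (2)–(4) as special cases or short computations. Since elements of $W_n$ are determined by their action on $P_n$, it suffices to check identities by applying both sides to an arbitrary $h\in P_n$.

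For item (1), take $h\in P_n$ and compute
\[
[fD_1,gD_2](h)=fD_1\bigl(gD_2(h)\bigr)-gD_2\bigl(fD_1(h)\bigr).
\]
Expanding each term by the Leibniz rule gives
\[
fD_1(g)D_2(h)+fgD_1D_2(h)-gD_2(f)D_1(h)-gfD_2D_1(h).
\]
The middle terms combine to $fg[D_1,D_2](h)$, because $P_n$ is commutative. The result is exactly the right-hand side of (1) evaluated at $h$. Item (2) is then immediate from (1) by setting $[D_1,D_2]=0$.

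For item (3), both sides are $K$-linear in $f$, so it is enough to treat a monomial $f=x_1^{a_1}\cdots x_n^{a_n}$ with $\sum a_j=m$. Then $x_j\frac{\partial f}{\partial x_j}=a_jf$, and summing over $j$ gives $E_n(f)=mf$.

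For item (4), by linearity I reduce to $D=g\frac{\partial}{\partial x_i}$ with $g$ homogeneous of degree $k+1$. I write $E_n=\sum_j x_j\frac{\partial}{\partial x_j}$ and apply (2) to each pair $x_j\frac{\partial}{\partial x_j}$ and $g\frac{\partial}{\partial x_i}$, since the partial derivatives commute. This gives
\[
\Bigl[x_j\tfrac{\partial}{\partial x_j},\,g\tfrac{\partial}{\partial x_i}\Bigr]
= x_j\tfrac{\partial g}{\partial x_j}\tfrac{\partial}{\partial x_i}-g\,\delta_{ij}\tfrac{\partial}{\partial x_j}.
\]
Summing over $j$ and using (3), I get
\[
[E_n,D]=E_n(g)\tfrac{\partial}{\partial x_i}-g\tfrac{\partial}{\partial x_i}=(k+1-1)D=kD.
\]

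There is no real obstacle here. The only points needing care are the sign conventions in (1) and the $\delta_{ij}$ term in (4), which accounts for the shift from $k+1$ to $k$.
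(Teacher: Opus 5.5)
Your proposal is correct and follows the paper's approach: the paper gives no argument beyond saying these identities ``can be verified immediately'' (citing Nowicki), and your Leibniz-rule expansion for (1), monomial check for (3), and reduction of (4) to $g\frac{\partial}{\partial x_i}$ via (2) are exactly that routine verification. The $\delta_{ij}$ term correctly produces the shift from $k+1$ to $k$, and this includes the edge case $k=-1$.
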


The next statement seems to be known, but since we could not find a precise reference, we provide a complete proof.  

\begin{lemma} \label{lm:grading}
	Let $L$ be a subalgebra of the Lie algebra $W_n(K)$. If $E_n \in L$, then $L$ is a graded subalgebra with respect to the standard grading on $W_n(K)$.
\end{lemma}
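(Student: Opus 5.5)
The approach is to decompose an arbitrary element of $L$ into homogeneous components and use $\ad E_n$ to separate them, as with a Vandermonde determinant. Let $D \in L$ be arbitrary. Since $D$ has polynomial coefficients, it has only finitely many nonzero homogeneous components, so we can write
\[
D = D_{k_1} + D_{k_2} + \cdots + D_{k_m},
\]
where $-1 \le k_1 < k_2 < \cdots < k_m$ and $D_{k_i} \in W^{[k_i]}_n$ is nonzero. It suffices to show that each $D_{k_i}$ lies in $L$.

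Since $E_n \in L$ and $L$ is a subalgebra, the iterated brackets $(\ad E_n)^t(D)$ belong to $L$ for every $t \ge 0$. By Lemma~\ref{lm:elem}(4), $[E_n, D_{k_i}] = k_i D_{k_i}$, and therefore
\[
(\ad E_n)^t(D) = \sum_{i=1}^m k_i^t D_{k_i} \in L, \qquad t = 0, 1, \ldots, m-1.
\]
This is a linear system expressing $m$ elements of $L$ in terms of the components $D_{k_1}, \ldots, D_{k_m}$. Its coefficient matrix is the Vandermonde matrix $(k_i^t)$.

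The only point needing care is that this matrix is invertible. The integers $k_1, \ldots, k_m$ are pairwise distinct. Because $\operatorname{char} K = 0$, they remain pairwise distinct as elements of $K$, so the Vandermonde determinant $\prod_{i<j}(k_j - k_i)$ is nonzero. It does not matter that $k_1$ may equal $0$ or $-1$. Inverting the matrix over $K$ writes each $D_{k_i}$ as a $K$-linear combination of the elements $(\ad E_n)^t(D)$, and these lie in $L$. Hence $D_{k_i} \in L$ for every $i$.

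It follows that $L = \bigoplus_{j \ge -1} (L \cap W^{[j]}_n)$, which is exactly the statement that $L$ is a graded subalgebra. I do not expect any real obstacle here; the only place where the hypotheses on $K$ enter is the use of characteristic zero in the invertibility step.
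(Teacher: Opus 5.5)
Your proof is correct and rests on the same idea as the paper's: because $E_n\in L$, the subspace $L$ is stable under $\ad E_n$, which acts on each $W_n^{[j]}$ as multiplication by the integer $j$, and these integers stay pairwise distinct in characteristic zero. The only difference is bookkeeping. The paper takes a minimal-length counterexample and uses $[E_n,D]-l_kD$ to remove the top component and shorten the element, whereas you recover all components at once by inverting the Vandermonde matrix $(k_i^t)$; the two arguments are equivalent.
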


\begin{proof}
	Recall that we use the  standard grading on $W_n$, $W_n(K) = \bigoplus_{i \ge -1} W_n^{[i]},$ where the homogeneous component  $W_n^{[i]}$ consists of all derivations whose polynomial coefficients are homogeneous  of degree $i+1$ or zero.
	Suppose, on the contrary, that the statement of the lemma is false. Then there exists a derivation $D \in L$ such that 
	$$D = D_{l_1} + \dots + D_{l_k}, l_1<l_2\cdots <l_k$$
	 with $k \ge 2$, where $D_{l_i} \in W_n^{[l_i]}$ are homogeneous components of degree $l_i$ such that all $D_{l_i}\not \in L.$ 
	Choose such a derivation of  the minimal possible length $k$.
	Since $E_n \in L$, by Lemma \ref{lm:elem}(4) we have  
	$$[E_n, D] = l_1 D_{l_1} + \dots + l_k D_{l_k} \in L.$$
	Therefore, $[E_n, D] - l_k D \in L.$ 	But then we have 
	\[
	[E_n, D] - l_k D = (l_1 - l_k)D_{l_1} + \dots + (l_{k-1} - l_k)D_{l_{k-1}}.
	\]
	Note that $l_i \neq l_k$ for $i < k$. If the element $ [E_n, D] - l_k D $  is non-zero, it has length $k-1$, which contradicts the minimality of  $k.$ 
	Thus all homogeneous components of $D$  belong to $L,$ and hence $L$ is graded. 
\end{proof}
Note that there are only a few known examples of maximal subalgebras of the  Lie algebra $W_n$ for $n>1,$ in contrast to the case $W_1,$ where all such subalgebras are described.  The next statement provides a series of maximal subalgebras of $W_n$.
	\begin{theorem}\label{th_1}
	\quad
\begin{enumerate}
	\item
	Let $s$ be an integer with $1\le s\le n-1$.
	Then the set
	\[
	m_s(K)=\left\{
	\sum_{i=1}^s f_i\frac{\partial}{\partial x_i}
	+\sum_{j=s+1}^n g_j\frac{\partial}{\partial x_j}
	\mid f_i\in P_s,\ g_j\in P_n
	\right\}
	\]
	is a maximal subalgebra of the Lie algebra $W_n$.
	
	\item
	The set
	\[
	I_s=\left\{\sum_{j=s+1}^n g_j\frac{\partial}{\partial x_j}\mid g_j\in P_n\right\}
	\]
	is a polynomial ideal of the Lie algebra $m_s(K)$ and
	$I_s\simeq P_s\otimes \mathrm{Der}(K[x_{s+1},\ldots,x_n])$.
	Besides, the quotient algebra $m_s(K)/I_s$ is isomorphic to the Lie algebra $W_s$.
	
	\item $I_s$ is a unique non-trivial ideal of the Lie algebra  $m_s(K)$.
	
	\item
	The subalgebras $m_s(K)$ and $m_t(K)$ are non-isomorphic for $s\ne t$,
	$s,t=1,\ldots,n-1$.
\end{enumerate}\end{theorem}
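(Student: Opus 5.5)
The plan is to handle the four parts in order. The main work is in (1) and (3), which both use the same device. Bracketing with the elements $\frac{\partial}{\partial x_j}$ lowers degrees. Bracketing with elements of the form $g\frac{\partial}{\partial x_j}$, where $g\in P_n$, produces arbitrary polynomial multiples.

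\emph{Part (1).} Let $M\supsetneq m_s(K)$ be a subalgebra and pick $D\in M\setminus m_s(K)$. Subtracting the $I_s$-part, which lies in $m_s(K)$, we may assume $D=\sum_{i\le s} f_i\frac{\partial}{\partial x_i}$ with some $f_i\notin P_s$. (Since $E_n\in m_s(K)$, Lemma~\ref{lm:grading} would also let us take $D$ homogeneous, but this is not needed.) Let $d\ge1$ be the maximal total degree of the $f_i$ in $x_{s+1},\dots,x_n$. Applying $d-1$ suitable operators $\operatorname{ad}\frac{\partial}{\partial x_j}$ with $j>s$, we stay in $M$ and stay outside $m_s(K)$. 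Discarding the part that lies in $m_s(K)$, we obtain an element $\sum_{i\le s}\sum_{j>s} b_{ij}x_j\frac{\partial}{\partial x_i}$ with $b_{ij}\in P_s$ and some $b_{kj}\neq0$. Bracketing once more with $\frac{\partial}{\partial x_j}$ would lose the element, so instead we bracket with $g\frac{\partial}{\partial x_j}\in I_s$. By Lemma~\ref{lm:elem}, modulo $I_s$ this gives $g\sum_i b_{ij}\frac{\partial}{\partial x_i}\in M$ for every $g\in P_n$.

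Next we apply $\operatorname{ad}\frac{\partial}{\partial x_l}$ with $l\le s$. The identity
\[
\Bigl[\frac{\partial}{\partial x_l},\, g b\frac{\partial}{\partial x_i}\Bigr]=\frac{\partial g}{\partial x_l}b\frac{\partial}{\partial x_i}+g\frac{\partial b}{\partial x_l}\frac{\partial}{\partial x_i},
\]
together with the fact that the first summand already lies in $M$ (it is of the same form with $g$ replaced by $\partial g/\partial x_l$), lets us replace the coefficients by their $x_l$-derivatives. Iterating, and choosing a coefficient of maximal degree so that not all of them vanish, we get $P_n\cdot\sum_i c_i\frac{\partial}{\partial x_i}\subseteq M$ for a nonzero constant vector $c$. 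Bracketing with the elements $x_k\frac{\partial}{\partial x_i}\in W_s\subset m_s(K)$, where $i,k\le s$, moves this vector around, and we obtain $P_n\frac{\partial}{\partial x_i}\subseteq M$ for all $i\le s$. Hence $M=W_n$.

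\emph{Part (2).} For $f\in P_s$, $i\le s$ and $j>s$, Lemma~\ref{lm:elem}(2) gives
\[
\Bigl[f\frac{\partial}{\partial x_i},\, g\frac{\partial}{\partial x_j}\Bigr]=f\frac{\partial g}{\partial x_i}\frac{\partial}{\partial x_j},
\]
because $\partial f/\partial x_j=0$. Therefore $I_s$ is an ideal, and it is clearly a $P_n$-submodule. Write $P_n=P_s\otimes_K Q$ with $Q=K[x_{s+1},\dots,x_n]$. The map $a\otimes h\frac{\partial}{\partial x_j}\mapsto ah\frac{\partial}{\partial x_j}$ is a Lie isomorphism $P_s\otimes\mathrm{Der}(Q)\to I_s$, since elements of $P_s$ behave as constants for the derivations $\frac{\partial}{\partial x_j}$ with $j>s$. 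The projection $m_s(K)\to W_s$ that forgets the $I_s$-part is a surjective homomorphism with kernel $I_s$.

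\emph{Part (3).} I expect this to be the main obstacle. Let $J$ be a nonzero proper ideal of $m_s(K)$.

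First suppose $J\cap I_s\neq0$. Take $0\ne D\in J\cap I_s$ and lower degrees with $\operatorname{ad}\frac{\partial}{\partial x_l}$ for all $l$, until we reach a nonzero $\sum_{j>s}c_j\frac{\partial}{\partial x_j}\in J$ with constant coefficients. Bracketing with $x_k\frac{\partial}{\partial x_l}$ for $k,l>s$ gives $\frac{\partial}{\partial x_l}\in J$ for every $l>s$. Every $g\in P_n$ is an $x_j$-derivative $\partial h/\partial x_j$ for some $h$, so
\[
\Bigl[\frac{\partial}{\partial x_j},\, h\frac{\partial}{\partial x_l}\Bigr]=g\frac{\partial}{\partial x_l}\in J,
\]
and hence $I_s\subseteq J$. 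Then $J/I_s$ is an ideal of $W_s$, and $W_s$ is simple, so $J=I_s$.

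Now suppose $J\cap I_s=0$. Then $[J,I_s]=0$. But for $D=\sum_{i\le s}f_i\frac{\partial}{\partial x_i}+\dots$ with some $f_i\neq0$, bracketing with $x_i\frac{\partial}{\partial x_n}$ produces the nonzero term $f_i\frac{\partial}{\partial x_n}$ in $I_s$. So every nonzero element of $J$ lies in $I_s$, which contradicts $J\cap I_s=0$.

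\emph{Part (4).} An isomorphism $m_s(K)\to m_t(K)$ maps the unique nontrivial ideal onto the unique nontrivial ideal, so it induces an isomorphism $W_s\simeq W_t$. I would conclude $s=t$ from the standard fact that $W_s\not\simeq W_t$ for $s\ne t$. If a self-contained argument is required, I would use as an invariant the maximal dimension of an abelian subalgebra consisting of ad-diagonalizable elements. In $W_s$ this dimension is $s$, realized by $x_1\frac{\partial}{\partial x_1},\dots,x_s\frac{\partial}{\partial x_s}$, and establishing that no larger such subalgebra exists is the delicate point here.
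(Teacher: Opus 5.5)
Your argument is correct apart from one repairable slip in (3). It takes a genuinely different route from the paper in (1), (3) and (4).

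\textbf{Part (1).} The paper uses $E_n\in m_s(K)$ and Lemma~\ref{lm:grading} to work with a homogeneous $D$. It pushes $D$ down to a linear element $x_{i_0}\sum_{i\le s}c_i\frac{\partial}{\partial x_i}$ and manufactures the single element $x_{i_0}\frac{\partial}{\partial x_{j_0}}$. Only at the end does it bracket with $f\frac{\partial}{\partial x_{i_0}}$ to obtain $P_n\frac{\partial}{\partial x_j}$. You exploit the $P_n$-module structure of $I_s$ much earlier. Once $D$ has degree one in $x_{s+1},\dots,x_n$, bracketing with $g\frac{\partial}{\partial x_j}$ gives the whole family $P_nB\subseteq M$. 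The later reductions ($\operatorname{ad}\frac{\partial}{\partial x_l}$, then $x_k\frac{\partial}{\partial x_i}$) are done on this family, so the cross terms are absorbed automatically. This makes the grading lemma unnecessary and keeps every step an explicit identity.

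\textbf{Part (3).} The paper shows that $I_s$ is minimal and maximal. It then excludes a complement $J\simeq W_s$ with $m_s(K)=J\oplus I_s$ by comparing maximal derived lengths of solvable subalgebras. That imports $d^\star(W_m)=2m$ from the literature and uses the asserted value $d^\star(I_s)=2(n-s)$. Your case split on whether $J\cap I_s$ is zero replaces all this with the elementary fact that $I_s$ has zero centralizer. It also treats every nonzero proper ideal directly.

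\textbf{Part (4).} You compare the quotients $W_s$ and $W_t$, whereas the paper compares the ideals $I_s\simeq P_s\otimes W_{n-s}$ and $I_t$. Yours is the more economical choice. You also do not need the toral-rank invariant whose upper bound you rightly call delicate. The same facts the paper cites give $d^\star(W_s)=2s\neq 2t=d^\star(W_t)$ at once. The paper's comparison additionally needs $d^\star(P_s\otimes W_{n-s})=d^\star(W_{n-s})$.

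\textbf{The slip.} In the case $J\cap I_s=0$ you claim that bracketing with $x_i\frac{\partial}{\partial x_n}$ gives a nonzero element because it ``produces the term $f_i\frac{\partial}{\partial x_n}$''. In fact
\[
\Bigl[D,\,x_i\frac{\partial}{\partial x_n}\Bigr]=f_i\frac{\partial}{\partial x_n}+x_i\Bigl[D,\frac{\partial}{\partial x_n}\Bigr],
\]
and the second summand can cancel the first. For $D=x_1\frac{\partial}{\partial x_1}+x_n\frac{\partial}{\partial x_n}\in m_s(K)$ one gets $\bigl[D,x_1\frac{\partial}{\partial x_n}\bigr]=0$. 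The fix uses your own hypothesis once more. Since $\frac{\partial}{\partial x_n}\in I_s$, also $\bigl[D,\frac{\partial}{\partial x_n}\bigr]=0$. Then $\bigl[D,x_i\frac{\partial}{\partial x_n}\bigr]=f_i\frac{\partial}{\partial x_n}$ must vanish, so all $f_i=0$. Hence $D\in J\cap I_s=0$, which gives the contradiction.
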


\begin{proof}[Proof of (1)]
	One can immediately check that $m_s(K)$ is a subalgebra of $W_n(K)$. Let $S$ be a   subalgebra of $W_n(K)$ strictly containing the subalgebra  $m_s(K)$.
	The subalgebra $m_s(K)$ obviously contains the Euler derivation $E_n = \sum_{i=1}^n x_i \frac{\partial}{\partial x_i}$, so 
	by Lemma \ref{lm:grading} the subalgebra $S$ is graded, $S = \bigoplus_{i \ge -1} S_i$, where  $S_i=S\cap W_n^{[i]} $ and $W_n^{[i]}$ denotes the standard grading component of the Lie algebra $W_n(K)$.
	
	Repeating some parts of the proof of Lemma \ref{lm:grading} one  can show that the set $S \setminus m_s(K)$ contains nonzero  homogeneous derivations. Fix such a derivation and denote it by $D$.
	Write $D = \sum_{i=1}^n h_i \frac{\partial}{\partial x_i}, $ where  $h_i\in P_n, i=1, \ldots , n.$ Since $D \notin m_s(K)$, there exists an index $j_0 \in \{1, \dots, s\}$ such that the coefficient $h_{j_0}$ depends on a variable $x_{i_0}, i_0 \ge s + 1,$ that is, $\frac{\partial h_{j_0}}{\partial x_{i_0}}\not =0$. 	
	Write  the polynomial $h_{j_0}$ as a polynomial in  $x_{i_0}$:
	\[
	h_{j_0} = u_0 + u_1 x_{i_0} + \dots + u_d x_{i_0}^d
	\]
	for some $d \ge 1$, where $u_t$ does not depend on $x_{i_0}$ for $t=0,\dots d$  and $u_d \neq 0.$
	
	Consider the  following homogeneous derivation:
	\[
	D_1 := 
	\left[ \dots \left[ D, \frac{\partial}{\partial x_{i_0}} \right], \dots, \frac{\partial}{\partial x_{i_0}} \right] = \sum_{l=1}^n q_l \frac{\partial}{\partial x_l}, 
	\]
	where $\partial/\partial x_{i_0}$  is applied  $d-1$ times).
	Note that the coefficient $ q_{j_0} $  can be written in the form $q_{j_0} = v_0 + v_1 x_{i_0}$ with $v_1 \neq 0$, and the polynomials $v_0, v_1$ do not depend on $x_{i_0}$.
	Clearly, $D_1 \in S \setminus m_s(K)$ as well.
	Without loss of generality, we may assume that $v_1 \in K^*$. Indeed, if $v_1 \notin K^*$,  consider
	a  monomial $x_1^{l_1} \dots x_{i_0}^0 \dots x_n^{l_n}$ of maximal total degree  in the polynomial $v_1$. Acting by the linear operator 
	$$\ad(\frac{\partial}{\partial x_1})^{l_1} \dots \ \ad(\frac{\partial}{\partial x_n})^{l_n}, \  l_i\geq 0, \  i=1, \ldots , n$$ on $D_1 $
	we can obtain a derivation satisfying the condition $v_1 \in K^*$.
	Since $D_1$ is  homogeneous, it must be a linear homogeneous derivation. Acting by the operator $\ad(x_{i_0} \frac{\partial}{\partial x_{i_0}})$
	on the derivation $D_1$ we obtain the derivation 
	\[
	D_2:= \left[x_{i_0} \frac{\partial}{\partial x_{i_0}}, D_1 \right] = \sum_{i=1}^n c_i x_{i_0} \frac{\partial}{\partial x_{i}}
	=  x_{i_0}\sum_{i=1}^n c_i \frac{\partial}{\partial x_{i}},
	\]
	where $c_i \in K$, $c_{j_0} \ne 0$.
	Again, $D_2 \in S \setminus m_s(K)$. We may assume that $c_i = 0$ for all $i > s$, since 
	$\sum_{i=s+1}^n c_i x_{i_0} \frac{\partial}{\partial x_{i}} \in m_s(K)$.
	Note that
	\[
	\left[x_{j_0} \frac{\partial}{\partial x_{i_0}}, D_2 \right] = 
	\left[ x_{j_0} \frac{\partial}{\partial x_{i_0}}, x_{i_0}\sum_{i=1}^{s} c_i \frac{\partial}{\partial x_i} \right] = 
	x_{j_0} \sum_{i=1}^{s} c_i \frac{\partial}{\partial x_i} - x_{i_0} c_{j_0} \frac{\partial}{\partial x_{j_0}}.
	\]
	Since the first term in the latter sum belongs to  the subalgebra $m_s(K)$ and $c_{j_0} \ne 0,$ we  conclude that 
	$x_{i_0} \frac{\partial}{\partial x_{j_0}} \in S$.
	
	Furthermore, for any $p, 1 \le p \le s $,  we have  the equality
	$$\left[ x_{j_0} \frac{\partial}{\partial x_p}, 
	x_{i_0} \frac{\partial}{\partial x_{j_0}} \right] = 
	-x_{i_0} \frac{\partial}{\partial x_p}.$$ 
	 
	So, we can find in the subalgebra $S$ all  derivations of the form $x_{i_0} \frac{\partial}{\partial x_p}$, $1 \le p \le s$.
	
	Take an arbitrary  element of the form $f(x_1, \dots, x_n) \frac{\partial}{\partial x_{i_0}}$ and let $j \le s$.
	Then 
	$$\left[ f(x_1, \dots, x_n) \frac{\partial}{\partial x_{i_0}}, 
	x_{i_0} \frac{\partial}{\partial x_{j}} \right] = 
	f(x_1, \dots, x_n) \frac{\partial}{\partial x_{j}} - 
	x_{i_0} \frac{\partial f}{\partial x_j} \frac{\partial}{\partial x_{i_0}}.$$
	Since the second term lies in $m_s(K)$, 
	we conclude that $f \frac{\partial}{\partial x_{j}} \in S$. Therefore, $S = W_n(K)$, which shows that  
	$m_s(K)$ is a maximal subalgebra of $W_n(K)$.
\end{proof}

\begin{proof}[Proof of (2)]
	It is easy to check that $I_s$ is an ideal of the Lie algebra $m_s(K). $  Let $D \in {I}_{s}$. Then  $D$ can be written in the form
	\[
	D = f_{s+1} \frac{\partial}{\partial x_{s+1}} + \dots + f_n \frac{\partial}{\partial x_n},
	\]
	where  $f_i\in P_n, i=s+1, \ldots , n.$ Note that any polynomial in $P_n = K[x_1, \dots, x_n]$ can be regarded  as a polynomial in variables $x_{s+1}, \dots, x_n$ with coefficients in $P_{s} = K[x_1, \dots, x_{s}]$. In particular, 
	\[
	P_n \cong P_{s} \otimes K[x_{s+1}, \dots, x_n].
	\]
	Therefore:
	\[
	{I}_{s} \cong P_{s} \otimes \left( K[x_{s+1}, \dots, x_n] \frac{\partial}{\partial x_{s+1}} + \dots + K[x_{s+1}, \dots, x_n] \frac{\partial}{\partial x_n} \right) \cong P_{s} \otimes W_{n-s}.
	\]
	The quotient algebra $m_s(K) / {I}_{s}$ is clearly isomorphic to the Lie algebra generated by the first $s$ partial derivatives with coefficients in $P_{s},$ since the terms involving $\partial/\partial x_j$ for $j > s$ lie in the ideal ${I}_{s}$.
	Hence, $m_s(K) / {I}_{s}\simeq  W_{s}$.
	\end{proof}
\begin{proof}[Proof of (3)]
	Proceed step by step.
	\begin{enumerate}
		\item $I_s$ is a maximal ideal of the Lie algebra $m_s(K) $.
		Indeed, $m_s(K) / I_s \simeq W_s$, which is a simple Lie algebra.
		
		\item $I_s$ is a minimal ideal of $ m_s(K).$ To show this 
take an arbitrary nonzero element $D \in I_{s}$. Consider the ideal $J$ in $m_{s}(K)$ generated by $D$. Let us  show that ${J} = I_{s}$. Indeed, applying the  adjoint operators $\ad \frac{\partial}{\partial x_{i}}, i=1, \ldots , n$ on $D$ sufficiently many times we can obtain the element
\begin{equation*}
	\overline{D} = c_{s+1} \frac{\partial}{\partial x_{s+1}} + \dots + c_{n} \frac{\partial}{\partial x_{n}}, \quad c_{i} \in K.
\end{equation*}
The elements  $\frac{\partial}{\partial x_{1}}, \dots, \frac{\partial}{\partial x_{n}}$ lie in $ m_{s}(K)$, so  $\overline{D} \in {J}$.  Note that $\overline{D} $ can be chosen nonzero since $ D\not =0,$  so for some $k, s+1\leq k\leq n $ the coefficient $c_k$ is nonzero. It is obvious that $c_k^{-1}x_k\frac{\partial}{\partial x_k}\in m_s(K). $ Therefore $[D, \  c_k^{-1}x_k\frac{\partial}{\partial x_k}]=\frac{\partial}{\partial x_k}\in J.$
 Then for an arbitrary derivation of the form $f(x_1, \ldots , x_n)\frac{\partial}{\partial x_{i}}, s+1\leq i\leq n$  one can write the derivation $-\int f(x) dx_{k} \frac{\partial}{\partial x_{i}}$  and this derivation lies in  the ideal $I_s.$ Then we obviously have
\begin{equation*}
	f(x) \frac{\partial}{\partial x_{i}} = \left[ -\int f(x) dx_{k} \frac{\partial}{\partial x_{i}}, \frac{\partial}{\partial x_{k}} \right] \in {J}.
\end{equation*}
This demonstrates that ${J} = I_{s}$.
	\item Uniqueness.
	If we assume there exists a maximal ideal ${J} \neq I_{s}$, then the sum ${J} + I_{s}$ must be $m_{s}(K)$. Therefore, we have
\begin{equation*}
	W_{s}(K) \simeq m_{s}(K) / I_{s} = ({J} + I_{s}) / I_{s} \simeq {J} / ({J} \cap I_{s})
\end{equation*}
Since $I_{s}$ is a minimal ideal of the Lie algebra $m_s(K)$ it holds  ${J} \cap I_{s} = 0.$ The latter   implies $m_{s}(K) \simeq W_{s}(K) \oplus I_{s}$. 

\noindent
In~\cite{MP}, it was proved that the derived length of solvable subalgebras in $W_n(K)$ is at most  $2n$. The known example of solvable subalgebras that achieves this bound was pointed out in~\cite{Martello}, this is the solvable subalgebra
$$s_n(K)=(P_0+x_1P_0)\frac{\partial}{\partial x_1}+\dots+(P_{n-1}+x_nP_{n-1})\frac{\partial}{\partial x_n},$$ 
see also \cite{ESS}.
Denote by $d^{\star}(L)$ the maximum of derived lengths of solvable subalgebras  from a Lie algebra $L$.  Note that the Lie algebra $m_s(K)$ contains the subalgebra $s_n(K),$ so $$d^{\star}(m_s(K))=d^{\star}(W_n(K))=2n.$$
But we also have
\begin{equation*}
	d^{\star}(m_s(K))=d^{\star}(W_{s}(K) \oplus I_{s}) = \max(d^{\star}(W_{s}(K)), d^{\star}(I_{s})) = \max(2s, 2n - 2s) \neq 2n
\end{equation*}
The obtained contradiction proves the uniqueness of the ideal $I_s$ in the Lie algebra $m_s(K).$ 
	\end{enumerate}
\end{proof}
\begin{proof}[Proof of (4)]
	 Suppose, on the contrary, that  $m_s(K)\simeq m_t(K)$ for $s\ne t$ and let 
	$\varphi:m_s(K)\to m_t(K)$ be an isomorphism.
	By the above proven  $I_s$ is a maximal ideal of the Lie algebra $m_s(K)$.
	Therefore, $\varphi(I_s)=I_t$ and hence $I_s\simeq I_t$.
	By the above mentioned
	\[
	I_s\simeq W_{n-s}\otimes P_s \quad\text{and}\quad
	I_t\simeq W_{n-t}\otimes P_t.
	\]
	It is easy to see that  $$d^{\star}(W_{n-s}\otimes P_s)=d^{\star}(W_{n-s})=2(n-s)$$ and $$d^{\star}(W_{n-t}\otimes P_t)=d^{\star}(W_{n-t})=2(n-t).$$
	Therefore  $I_s\not\simeq I_t$, which implies  $m_s(K)\not\simeq m_t(K)$ for $s\ne t$.
	The proof is complete.
\end{proof}

\begin{remark}
	One may also consider a partition of the set $\{1,\ldots,n\}$ into a disjoint union
	$$\{1,\ldots,n\}=I\cup J, I=\{i_1,\ldots,i_s\}, J=\{i_{s+1},\ldots,i_n\}$$
	and define the  subalgebra
	\[
	m_I(K)=K[x_{i_1},\ldots,x_{i_s}]\frac{\partial}{\partial x_{i_1}}
	+\cdots+
	K[x_{i_1},\ldots,x_{i_s}]\frac{\partial}{\partial x_{i_s}}
	+P_n\frac{\partial}{\partial x_{i_{s+1}}}
	+\cdots+
	P_n\frac{\partial}{\partial x_{i_n}}.
	\]
	This subalgebra is also maximal in $W_n$ since it is conjugate to
	$m_s(K)$ via the automorphism $\theta\in \mathrm{Aut}\,P_n$ defined by
	$\theta (x_j)=x_{i_j}$, $j=1, \ldots , n$.
\end{remark}

\begin{remark}
	One could  attempt to generalize the previous theorem: for example, 
	for integers $s_1, s_2, 1\leq s_1<s_2<n$ one can consider the $K$-subspace of the Lie algebra $W_n$ of the form
	$$m_{s_1, s_2}:= P_{s_1}\frac{\partial}{\partial x_{1}}+\cdots +P_{s_1}\frac{\partial}{\partial x_{s_1}}+P_{s_2}\frac{\partial}{\partial x_{s_1+1}}+\cdots +P_{s_2}\frac{\partial}{\partial x_{s_2}}+  $$
	$$+P_{n}\frac{\partial}{\partial x_{s_2+1}}+\cdots +P_{n}\frac{\partial}{\partial x_{n}}.  $$
	 This subspace is a Lie subalgebra of $W_n$, but it is not maximal: $ m_{s_1, s_2}$ is strictly contained in the subalgebra $m_{s_1}$ defined earlier.
	
\end{remark}

\section{Polynomial maximal subalgebras of $W_n(K)$}

To study maximal subalgebras of the Lie algebra $W_n(K),$ it is useful to consider some general properties of polynomial subalgebras of
this Lie algebra.
There is a natural way to construct such subalgebras.
Let $I$ be a nonzero ideal of the ring $P_n$ and let $L$ be an arbitrary nonzero subalgebra of $W_n$.
Define the next vector space over the field $K$ consisting of finite sums of products 
\[
IL=\left\{\sum h_i D_i\mid h_i\in I,\ D_i\in L\right\} .
\]
Then $IL$ is a subalgebra of $W_n$ and, if $I=P_n$ we clearly have   $L\subseteq P_nL.$ 
It is obvious that $IL$ is a polynomial subalgebra  and
$\mathrm{rk}\,IL=\mathrm{rk}\,L$.
Since $W_n$ is a Noetherian module over $P_n$, every polynomial subalgebra
$L$ of $W_n$ is finitely generated as a $P_n$-module, that is,  there exist
elements $D_1,\ldots,D_m$ such that $ L=P_n D_1+\cdots+P_n D_m.$ We consider  maximal subalgebras of rank $n$ from the Lie algebra $W_n(K).$ 

\begin{theorem} \label{th:poly}
	Let $M$ be a maximal subalgebra of the Lie algebra $W_n$ with
	$\mathrm{rk}\,M=n$.
	Then
	\begin{enumerate}
		\item
		if $M$ is not a polynomial subalgebra and contains a nonzero ideal of
		rank  $\leq n-1$, then $M$ has a unique maximal (by inclusion) polynomial
		ideal $M_0$ with $rk(M_0)=k$ for some $k<n$ such that $M/M_0$ does not contain any ideals of rank
		$<n-k$.
		\item
		if $M$ is a polynomial subalgebra then $M$ contains a subalgebra
		$I W_n$ of rank $n$ for some ideal $I\in P_n$;
		
	\end{enumerate}
\end{theorem}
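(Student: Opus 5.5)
Part (2) is the easier one, so I would do it first. If $M$ is a polynomial subalgebra of rank $n$, then as a $P_n$-submodule of the free module $W_n$ it is generated by finitely many elements $D_1,\ldots,D_m$. Because the rank is $n$, we may choose $n$ of them, say $D_1,\ldots,D_n$, that are linearly independent over $R_n$. Let $A$ be the $n\times n$ matrix of their coefficients with respect to $\frac{\partial}{\partial x_1},\ldots,\frac{\partial}{\partial x_n}$, and put $\Delta=\det A\neq 0$. By Cramer's rule (adjugate matrix), $\Delta\frac{\partial}{\partial x_i}=\sum_j a^*_{ij}D_j\in P_nM=M$ for every $i$, where $a^*_{ij}\in P_n$ are the adjugate entries. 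Take $I=\Delta P_n$, the principal ideal. Then $IW_n=\sum_i P_n\Delta\frac{\partial}{\partial x_i}\subseteq M$. Here $IW_n$ is a subalgebra by the remark preceding the theorem, and $\rk IW_n=\rk W_n=n$. This proves (2).

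For (1), let $M$ be maximal, of rank $n$, not polynomial, and suppose it has a nonzero ideal of rank $\le n-1$. The first step is to produce polynomial ideals. If $J$ is an ideal of $M$, the natural candidate is the largest $P_n$-submodule contained in $J$, namely $J^{\circ}=\{D\in W_n : P_nD\subseteq J\}$. I would check via Lemma~\ref{lm:elem}(1) that $J^{\circ}$ is again an ideal of $M$. That identity gives
\[
[fD,E]=f[D,E]-E(f)D ,
\]
so for $D\in J^\circ$ and $E\in M$ the bracket lands in $J$ for all multipliers $f$.

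The harder task is to show that $J^\circ\neq 0$. For this I would show that $P_nJ$ is $M$-invariant. The key point is that $P_nJ+M$ is a subalgebra of $W_n$ strictly larger than $M$: it is larger because otherwise $P_nJ\subseteq M$, and one then hopes to force $M$ to be polynomial. Maximality then gives $P_nJ+M=W_n$. Comparing ranks, and using that $\rk P_nJ<n$ while $W_n$ is generated by $M$ and $P_nJ$, I would try to extract a nonzero ideal $I'\subseteq P_n$ with $I'J\subseteq J$, which yields polynomial elements inside $J$. I expect this to be the main obstacle, and the one place where maximality together with non-polynomiality of $M$ is really used.

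Given that nonzero polynomial ideals of rank $<n$ exist, the next step is to take the sum $M_0$ of all polynomial ideals of $M$ of rank $<n$. A sum of ideals is an ideal, and a sum of $P_n$-submodules is a submodule, so $M_0$ is a polynomial ideal; it is unique and maximal by construction. Its rank $k$ must be $<n$, since otherwise $M_0\subseteq M$ would have rank $n$ and part~(2)-type arguments would force $M=P_nM$. Noetherianity guarantees that $M_0$ is finitely generated and is attained as a finite sum.

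Finally, suppose $M/M_0$ contained an ideal $\bar J$ of rank $<n-k$, where rank is taken in $W_n/P_nM_0$ over $R_n$. Its preimage $J$ would then be an ideal of $M$ of rank $<n$. Applying the polynomial-part construction to $J$ gives a polynomial ideal strictly containing $M_0$ and of rank $<n$, which contradicts the maximality of $M_0$.
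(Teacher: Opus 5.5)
Your part (2) is fine and is essentially the paper's argument: the paper writes each $h_i\frac{\partial}{\partial x_i}$ as a $P_n$-combination of $D_1,\dots,D_n$ and takes $I=\{f: fW_n\subseteq M\}$, while you use the adjugate and $I=\Delta P_n$. Part (1), however, has a genuine gap at exactly the step you flag, and your plan there points the wrong way. By Lemma~\ref{lm:elem}(1), $[E,fD]=E(f)D+f[E,D]$, so for an ideal $J$ of $M$ the module $P_nJ$ is a subalgebra with $[M,P_nJ]\subseteq P_nJ$, and $M+P_nJ$ is a subalgebra. The case you aim for, $M+P_nJ=W_n$, cannot occur. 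If it did, $P_nJ$ would be a nonzero ideal of $W_n$ of rank $\rk J<n$, contradicting the simplicity of $W_n$. Maximality therefore gives $P_nJ\subseteq M$, and this does \emph{not} force $M$ to be polynomial, contrary to what you ``hope''. So $P_nJ$ itself is a polynomial ideal of $M$ of rank $<n$ containing $J$. There is no need for an ideal $I'\subseteq P_n$ with $I'J\subseteq J$, and nothing in your proposal would actually produce one.

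Working with the inner submodule $J^{\circ}$ instead of the outer one $P_nJ$ also breaks your last step. If $J\supsetneq M_0$ is the preimage of a small ideal of $M/M_0$, you only get $M_0\subseteq J^{\circ}\subseteq J$, and nothing excludes $J^{\circ}=M_0$, so there is no contradiction. With $P_nJ\supseteq J\supsetneq M_0$ the contradiction with the maximality of $M_0$ is immediate, which is how the paper concludes. Finally, your reason for $k<n$ (``part (2)-type arguments would force $M=P_nM$'') is not an argument: a full-rank polynomial ideal $N$ only gives $hW_n\subseteq M$. The paper does not spell this point out either. One way to close it is as follows. Since $M$ is maximal and not polynomial, the subalgebra $P_nM$ must equal $W_n$. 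The nonzero ideal $I=\{g\in P_n : gW_n\subseteq N\}$ is stable under every $E\in M$, because $[E,gD]=E(g)D+g[E,D]$. Hence it is stable under every $\frac{\partial}{\partial x_i}$, so $I=P_n$ and $N=W_n$, which is a contradiction.
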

\begin{proof}
	(1) Let the maximal subalgebra $M$ be non-polynomial, and let $J\subset M$ be a nonzero ideal of $M$
	of rank $<n$.
	Then the  product $P_nJ$ is a polynomial subalgebra of $W_n$, and
	$[M,P_nJ]\subset P_nJ$.
	Therefore, $M_1=M+P_nJ$ is a subalgebra of $W_n$.
	Note that $M_1\ne W_n$.
	Indeed, if $M_1=W_n$, then the nonzero subalgebra $P_nJ$ would be  a proper ideal
	of the Lie algebra $W_n$.
	This is impossible, since  $W_n$ is a simple Lie algebra.
	Thus $M_1=M$,  and  hence $P_nJ\subset M$.
	We have therefore  shown that every ideal $J$ of $M$ of rank $<n$ is contained
	in the polynomial ideal $P_nJ$ of the Lie algebra $M$.
	Denote by $M_0$ the maximal polynomial ideal of $M$. Such an ideal exists, since  the sum of any family of polynomial ideals of $M$
	is again a polynomial ideal of $M$.
	Then $M_0$ is the unique maximal polynomial ideal of $M$, and it
	contains all ideals of rank $<n$ of the Lie algebra $M$. Denote $k=rk(M_0), $ where the rank is understood  as  the rank of a submodules of the $P_n$-module 	$W_n$.
	Finally, let $J/M_0$ be a nonzero ideal of the Lie algebra $M/M_0$
	with $\mathrm{rk}\,J/M_0<n-k.$ 
	Then $J$ is an ideal of $M$ of rank less than $n$, and $J\not \subseteq  M_0$ which
	contradicts the above results.
	Therefore, the  factor-algebra
	$M/M_0$ has no nonzero ideals of rank $<n-k.$ 
	
	(2) Now let $M$ be a maximal polynomial subalgebra of $W_n,$ and let $I=\{f\in P_n\mid fW_n\subset M\}$.
	It is easy to see that $I$ is an ideal of the polynomial ring $P_n$.
	We show that $I\ne0$.
Since $\mathrm{rk}\,M=n$, there exist elements
	$D_1,\ldots,D_n\in M$ that are linearly independent over the ring $P_n$.
	Therefore, the elements
	$\frac{\partial}{\partial x_i},D_1,\ldots,D_n, i=1, \ldots , n$ are linearly dependent over $P_n.$, Hence there exist
	polynomials $h_{i},h_{1i},\ldots,h_{ni}$, $h_i\ne0$, such that
	\[
	h_i\frac{\partial}{\partial x_i} = h_{1i}D_1+\cdots+h_{ni}D_n, \ i=1, \ldots , n.
	\]
	Set $h=h_1\cdots h_n$.
	Then $h\ne0$ and $hW_n\subset M$.
	This implies that  $h\in I$, and  hence $I\ne0$.
	The product $IW_n$ is clearly a polynomial subalgebra of rank $n$
	contained in  the subalgebra $M$.
		The proof is complete.
\end{proof}
\section*{Acknowledgments}
The first author of this work was partially supported by a grant
from the Simons Foundation (SFI-PD-Ukraine-00014586, Y.Y.C.).
Yevhenii Chapovskyi expresses his gratitude for the financial support of the National Academy of Sciences of Ukraine 
within the framework of the project 0125U002856 for young scientists.


%
\end{document}